\newtheorem{theorem}{Theorem}
\newtheorem{lemma}[theorem]{Lemma}
\newtheorem*{prop*}{Proposition}
\newtheorem{conj}{Conjecture}
\newtheorem*{conj*}{Conjecture}
\newtheorem*{fact*}{Fact}
\newtheorem*{ex*}{Example}
\newcommand*{\claimproofname}{Proof of claim}
\newcommand{\eps}{\varepsilon}
\newcommand{\bC}{\mathbf{C}}
\DeclareMathSymbol{\shortminus}{\mathbin}{AMSa}{"39}
\theoremstyle{definition}
\numberwithin{equation}{section} 
\numberwithin{figure}{section}
\numberwithin{table}{section}
\newcommand{\bE}{\mathbf{E}}
\newcommand{\bF}{\mathbf{F}}
\newcommand{\1}{(-1)}
\begin{document}
	
\title[Fourier growth]{Fourier growth of degree $2$ polynomials}

 \begin{abstract}
    We prove bounds for the absolute sum of all level-$k$ Fourier coefficients for $\1^{p(x)}$, where polynomial $p:\bF_2^n \to \bF_2$ is of degree $1$ or degree $2$.
\end{abstract}

\author[L. Becker]{Lars Becker}
\address{(L.B.) Mathematics Department, Princeton University, Fine Hall, Washington Road, Princeton, NJ 08544-1000, USA}
\email{lbecker@math.princeton.edu}

\author[J. Slote]{Joseph Slote}
\address{(J.S.) Department of Computing and Mathematical Sciences, California Institute of Technology, Pasadena, CA}
\email{jslote@uw.edu}

\author[A. Volberg]{Alexander Volberg}
\address{(A.V.) Department of Mathematics, MSU, 
East Lansing, MI 48823, USA, and Hausdorff Center of Mathematics, Bonn, Germany}
\email{volberg@math.msu.edu}

\author[H. Zhang]{Haonan Zhang}
\address{(H.Z.) Department of Mathematics, University of South Carolina}
\email{haonanzhangmath@gmail.com}

%\subjclass[2010]{46B10, 46B09; 46B07; 60E15}

\thanks{ This work was completed while the  authors were in residence at the
Hausdorff Research Institute for Mathematics in Bonn, during the trimester
program ``Boolean Analysis in Computer Science", funded by the Deutsche
Forschungsgemeinschaft (DFG, German Research Foundation) under Germany's
Excellence Strategy – EXC-2047/1 – 390685813; }
\thanks{L.B. is supported by the Collaborative Research Center 1060 funded by the Deutsche Forschungsgemeinschaft;}

\thanks{J.S. is supported by Chris Umans’ Simons Institute Investigator Grant;}
\thanks{A.V. is supported by NSF  DMS-2154402 and by the Hausdorff Center for Mathematics;}
\thanks{H.Z. is supported by NSF  DMS-2453408.}

\subjclass[2010]{68Q17; 46B10; 60E15}

\keywords{Pseudorandom generators,  Fourier weight of $\bF_2$ polynomials,  vectors of a fixed boolean weight}
	
\maketitle

\section{Introduction}
	
Let $\bF_2=\{0,1\}.$ Any function 
$f: \bF_2^n \to \bC$ has the Fourier--Walsh expansion
\begin{equation}
    f(x)=\sum_{A\subset [n]}\widehat{f}(A)(-1)^{x_A},
\end{equation}
where $A\subset [n]:=\{1,2,\dots, n\}$ and $x_A = \sum_{i \in A} x_i$. The degree of $f$ is defined as $\deg(f):=\max_{\widehat{f}(A)\neq 0}|A|$, where $|A|$ denotes the cardinality of $A$. 

Boolean functions $f:\bF_2^n\to \{-1,1\}$ play an important role in theoretical computer science and social choice theory. Of particular interest to us are those of the form $f(x)=(-1)^{p(x)}$ with $p:\bF^n_2\to \bF_2$ a polynomial of small degree.  

We want to estimate independently of $n$ the $k$-th Fourier weight of $\1^{p(x)}$, where $p:\bF_2^n\to \bF_2$ a polynomial of degree at most $d$. Here, the $k$-th Fourier weight of a function $f: \bF_2^n \to \bC$ refers to the quantity
\[
    \sum_{|A| = k} |\widehat{f}(A)|\,,
\]
where 
\[
    \widehat f(A) = \frac{1}{2^n} \sum_{x \in \bF_2^n} f(x) (-1)^{x_A}\,.
\]
In fact, Chattopadhyay, Hatami, Hosseini and Lovett \cite[Open problem 1.7]{CHHL} conjectured the following. 

\begin{conj}
    Let $f(x)=(-1)^{p(x)},x\in \bF_2^n$ with $p:\bF_2^n\to \bF_2$ being a polynomial of degree at most $d$. Then there exists a constant $c_d>0$ depending only $d$ such that 
    \begin{equation*}
        \sum_{A:|A|=k}|\widehat{f}(A)|\le c_d^k
    \end{equation*}
    for all $k\ge 1$. In particular, the constant $c_d=2^{O(d)}$ should work. 
\end{conj}

As explained in \cite{CHHL}, it is necessary to assume the exponential dependence on $k$. In case $\deg(p)\le 1$, we may write $p(x)=a+x_{i_1}+\cdots +x_{i_m}$ for $1\le i_1<\dots<i_m\le n$ and some $a\in \bF_2$. Then 
\begin{equation*}
    |\sum_{x\in \bF_2^n}(-1)^{p(x)+x_A}|
    =\begin{cases}
    2^n & A=\{i_1,\dots, i_m\}\\
    0. & \textnormal{otherwise}
    \end{cases}
\end{equation*}
Therefore, we have
\begin{equation*}
    \sum_{A\subset [n]}|\widehat{f}(A)|
    =\frac{1}{2^n}\sum_{A\subset [n]}|\sum_{x\in \bF_2^n}(-1)^{p(x)+x_A}|
    =1
\end{equation*}
and thus 
\begin{equation*}
    \sum_{A:|A|=k}|\widehat{f}(A)|\le 1.
\end{equation*}
%the absolute sum of all Fourier coefficients of $(-1)^{p(x)}$ is exactly $1$. In particular the $k$-th Fourier weight is at most $1$ for all $k$. 

For $p$ of degree at most 2, we will prove by a simple induction argument the following.
\begin{theorem}
    \label{thm deg 2}
    For any polynomial $p: \bF^n_2 \to \bF_2$ of degree at most $2$, the $k$-th Fourier weight of $f(x)=(-1)^{p(x)}$ is at most $(1 + \sqrt{2})^k$,
    that is, 
    \begin{equation*}
        \sum_{A:|A|=k}|\widehat{f}(A)|\le (1+\sqrt{2})^k.
    \end{equation*}
\end{theorem}
This confirms a conjecture in \cite{CHHL} for polynomials of degree $2$. We note that the authors state in \cite{CHHL} to already have a proof in that case, which appears however not to be published. In Section \ref{dickson}, we give another proof of Theorem \ref{thm deg 2} based on the strategy they had in mind. We thank Pooya Hatami for kindly sharing it with us. It gives the following sharp upper bound.

\begin{theorem}
    \label{thm deg 2 sharp}
    For any polynomial $p: \bF^n_2 \to \bF_2$ of degree at most $2$, the $k$-th Fourier weight of $f(x)=(-1)^{p(x)}$ is
    \begin{equation*}
        \sum_{A:|A|=k}|\widehat{f}(A)|\le \sqrt{\frac{2}{\pi}} k^{-1/2} (1+\sqrt{2})^k.
    \end{equation*}
    This is sharp in the sense that there exist pairs $(p,k)$ for which the ratio of right hand side and left hand side gets arbitrarily close to $1$.
\end{theorem}

\medskip

Pseudorandom generators and fractional pseudorandom generators are constructed in \cite{CHHL}, Section 4, 
given that the exponential bounds on Fourier weight of  $(-1)^p$ for high degree polynomials  $p$ are proved.

Pseudorandom generators  are widely studied in complexity theory. There are several general
frameworks used to approach their possible construction. The literature is vast, one can read a survey \cite{G} for example. 
A harmonic analysis approach was suggested in 
\cite{CHHL}, and it requires the good estimate of Fourier weights. The reader can see other approaches in e. g. \cite{NW}, \cite{HLW}.

In \cite{CGLLS} it was  shown that for
polynomial error, Fourier bounds on the first $O(\log n)$ levels is sufficient to recover the seed
length (the smallness of pseudorandom generator) obtained in \cite{CHLT}, which required bounds on the entire tail.

\section{Proof of Theorem \ref{thm deg 2} by induction}
\label{firstproof}

We denote by $W(2,k,n)$ the maximum over all polynomials of degree $2$ on $\bF_2^n$ of the $k$-th Fourier weight of $(-1)^{p(x)}$:
\begin{equation}
\label{W def}
    W(2,k,n) = \max_{\mathrm{deg} \, p \le 2} \sum_{A \subset [n], |A| = k} \left|\frac{1}{2^n} \sum_{x \in \bF_2^n} (-1)^{p(x)}  (-1)^{x_A}\right|\,.
\end{equation}
Here we abbreviate $x_A = \sum_{i \in A} x_i$. 
%We further denote the maximum $k$-th Fourier weight among all $n$ variable degree $2$ polynomials by $W(2,k,n)$:
% $$
%     W(2,k,n) = \max_{\mathrm{deg} \, p \le 2} \sum_{A \subset [n], |A| = k} \left|\frac{1}{2^n} \sum_{x \in \bF_2^n} (-1)^{p(x)}  (-1)^{x_A}\right|\,.
% $$
%In Theorem 6.1 of \cite{CHHL} is is shown that
% $$
%     W(2,k) \le (64 k )^k\,.
% $$
% Following the argument in the previous section, we can improve this to a bound for $W(2,k)$ that is exponential in $k$.
Theorem \ref{thm deg 2} claims that for all $0 \le k \le n$
\begin{equation}
    \label{eq weight}
        W(2,k, n) \le (1 + \sqrt{2})^{k}\,.
\end{equation}
We will prove this by induction on the number of variables $n$ and on $k$. Note that the estimate is obvious when $k = 0$ or $k = n$, since then we have $W(2,k,n) \le 1$. So let $k \ge 1$ and $n \ge k+1$.

For our induction step, we assume that for all $k' < k$ and all $n'$
\begin{equation}
    \label{induction1}
    W(2, k',n') \le (1 + \sqrt{2})^{k'} 
\end{equation}
and for all $n' < n$
\begin{equation}
    \label{induction2}
    W(2,k,n') \le (1 + \sqrt{2})^k\,.
\end{equation}
Our goal is to deduce from this that 
$$
    W(2,k,n) \le (1 + \sqrt{2})^k\,,
$$
this will complete the proof of the theorem.

Let $p$ be a maximizing polynomial of degree at most $2$ in $n$ variables, such that
\begin{align}
    W(2,k, n) &= \sum_{A \subset [n], |A| = k} \left|\frac{1}{2^n} \sum_{x \in \bF_2^n} (-1)^{p(x)}  (-1)^{x_A}\right|\label{eps def}.
\end{align}

If $p$ is affine then we have 
$$
    p(x) = a + x_{i_1} + \dotsb + x_{i_m}
$$
for some $m$ and $a \in \bF_2$. In that case, as explained above, \eqref{eps def} is $1$ if $k = m$ and $0$ if $k \neq m$, so $W(2,k, n) \le 1$.

If $p$ is not affine, then we write it, perhaps after permuting the variables, as
\begin{equation}\label{p(x)}
	    p(x) = p_0(\bar x) + x_1 p_1(\bar x) + x_2 p_2(\bar x) + x_1 x_2\,,
\end{equation}
where $\bar x = (x_3, \dotsc, x_n)$. For a set $A \subset [n]$, we abbreviate $\bar A = A \setminus [2]=A \cap \{3, \dotsc, n\} $. According to our notation, $\bar{x}_{\bar{A}}=\sum_{i\in [n]\setminus [2]}x_i$. Writing out the sum over $x_1, x_2$ in \eqref{eps def}, we then obtain
\begin{align*}
	    W(2,k, n)
	    =I_{0}+I_{1}+I_{2}+I_{12}
\end{align*}
where (we omit the constraints $A\subset [n]$ and $|A| = k$ for notational convenience)
\begin{align*}
	I_{0}&=\sum_{A \cap\{1,2\}=\emptyset} \left|\frac{1}{2^n} \sum_{x \in \bF_2^n} (-1)^{p(x)}  (-1)^{x_A}\right|,\\
		I_{1}&=\sum_{A \cap\{1,2\}=\{1\}} \left|\frac{1}{2^n} \sum_{x \in \bF_2^n} (-1)^{p(x)}  (-1)^{x_A}\right|,\\
			I_{2}&=\sum_{A \cap\{1,2\}=\{2\}} \left|\frac{1}{2^n} \sum_{x \in \bF_2^n} (-1)^{p(x)}  (-1)^{x_A}\right|,\\
			I_{12}&=\sum_{A \cap\{1,2\}=\{1,2\}} \left|\frac{1}{2^n} \sum_{x \in \bF_2^n} (-1)^{p(x)}  (-1)^{x_A}\right|.
\end{align*}
Recall that by \eqref{p(x)}
$$p(0,0,\bar{x})=p_0(\bar{x}),\qquad p(0,1,\bar{x})=p_0(\bar{x})+p_2(\bar{x})$$
and 
$$p(1,0,\bar{x})=p_0(\bar{x})+p_1(\bar{x}),\qquad p(1,1,\bar{x})=1+p_0(\bar{x})+p_1(\bar{x})+p_2(\bar{x}).$$
 Consider the term $I_0$ first. Since $A \cap\{1,2\}=\emptyset$, we have $(-1)^{x_A}=(-1)^{\bar{x}_{\bar{A}}},$
and thus 
\begin{equation*}
	I_0=\frac{1}{2^n}\sum_{A \cap\{1,2\}=\emptyset} \left|\sum_{\bar{x} \in \bF_2^{n-2}} (-1)^{\bar{x}_{\bar{A}}+p_0(\bar{x})}  \left(1+(-1)^{p_1(\bar{x})}+(-1)^{p_2(\bar{x})}-(-1)^{p_1(\bar{x})+p_2(\bar{x})} \right)\right|.
\end{equation*}
The other three terms can be rewritten in a similar way.  For $I_1$, since $A \cap\{1,2\}=\{1\}$, we have $(-1)^{x_A}=(-1)^{x_1+\bar{x}_{\bar{A}}},$
and thus 
\begin{equation*}
	I_1=\frac{1}{2^n}\sum_{A \cap\{1,2\}=\{1\}} \left|\sum_{\bar{x} \in \bF_2^{n-2}} (-1)^{\bar{x}_{\bar{A}}+p_0(\bar{x})}  \left(1-(-1)^{p_1(\bar{x})}+(-1)^{p_2(\bar{x})}+(-1)^{p_1(\bar{x})+p_2(\bar{x})} \right)\right|.
\end{equation*}
For $I_2$, since $A \cap\{1,2\}=\{2\}$, we have $(-1)^{x_A}=(-1)^{x_2+\bar{x}_{\bar{A}}},$
and thus 
\begin{equation*}
	I_2=\frac{1}{2^n}\sum_{A \cap\{1,2\}=\{2\}} \left|\sum_{\bar{x} \in \bF_2^{n-2}} (-1)^{\bar{x}_{\bar{A}}+p_0(\bar{x})}  \left(1+(-1)^{p_1(\bar{x})}-(-1)^{p_2(\bar{x})}+(-1)^{p_1(\bar{x})+p_2(\bar{x})} \right)\right|.
\end{equation*}
For $I_{12}$, since $A \cap\{1,2\}=\{1,2\}$, we have $(-1)^{x_A}=(-1)^{x_1+x_2+\bar{x}_{\bar{A}}},$
and thus 
\begin{equation*}
	I_{12}=\frac{1}{2^n}\sum_{A \cap\{1,2\}=\{1,2\}} \left|\sum_{\bar{x} \in \bF_2^{n-2}} (-1)^{\bar{x}_{\bar{A}}+p_0(\bar{x})}  \left(1-(-1)^{p_1(\bar{x})}-(-1)^{p_2(\bar{x})}-(-1)^{p_1(\bar{x})+p_2(\bar{x})} \right)\right|.
\end{equation*}

%\[
%    W(2,k, n)
%\]
%\[
%    = \frac14 \frac1{2^{n-2}} \sum _{\bar x \in \bF_2^{n-2}} \1^{p_0(\bar x)}\cdot
%\]
%\[
%    \Big[ \sum_{A\cap \{1,2\} = \emptyset} \varepsilon_A\1^{x_{\bar A}} + \sum_{A\cap \{1,2\} = \{1\}}\varepsilon_A\1^{x_{\bar A}} 
%    + \sum_{A\cap \{1,2\} = \{2\}} \varepsilon_A\1^{x_{\bar A}}+  \sum_{\{1,2\} \subset A} \varepsilon_A\1^{x_{\bar A}}\Big] 
%\]
%\[
%    +\frac14 \frac1{2^{n-2}} \sum _{\bar x \in \bF_2^{n-2}} \1^{p_0(\bar x)}\1^{p_1(\bar x)}\cdot
%\]
%\[
%    \Big[ \sum_{A\cap \{1,2\} = \emptyset} \varepsilon_A\1^{x_{\bar A}} - \sum_{A\cap \{1,2\} = \{1\}}\varepsilon_A\1^{x_{\bar A}} + \sum_{A\cap \{1,2\} = \{2\}} \varepsilon_A\1^{x_{\bar A}}-  \sum_{\{1,2\} \subset A} \varepsilon_A\1^{x_{\bar A}}\Big] 
%\]
%\[
%    + \frac14 \frac1{2^{n-2}} \sum _{\bar x \in \bF_2^{n-2}} \1^{p_0(\bar x)}\1^{p_2(\bar x)}\cdot
%\]
%\[
%    \Big[ \sum_{A\cap \{1,2\} = \emptyset} \varepsilon_A\1^{x_{\bar A}} + \sum_{A\cap \{1,2\} = \{1\}}\varepsilon_A\1^{x_{\bar A}} - \sum_{A\cap \{1,2\} = \{2\}} \varepsilon_A\1^{x_{\bar A}}-  \sum_{\{1,2\} \subset A} \varepsilon_A\1^{x_{\bar A}}\Big] 
%\]
%\[
%    -\frac14 \frac1{2^{n-2}} \sum _{\bar x \in \bF_2^{n-2}} \1^{p_0(\bar x)} \1^{p_1(\bar x)} \1^{p_2(\bar x)}\cdot
%\]
%\[
%    \Big[ \sum_{A\cap \{1,2\} = \emptyset} \varepsilon_A\1^{x_{\bar A}} - \sum_{A\cap \{1,2\} = \{1\}}\varepsilon_A\1^{x_{\bar A}} - \sum_{A\cap \{1,2\} = \{2\}} \varepsilon_A\1^{x_{\bar A}}+  \sum_{\{1,2\} \subset A} \varepsilon_A\1^{x_{\bar A}}\Big].
%\]
We will simplify these formulae using the following Lemma. 

\begin{lemma}
    For $a,b\in \bF_2$, we have 
    \begin{equation}
         1 + (-1)^{a} + (-1)^{b} - (-1)^{a+b} = 2(-1)^{ab}\,.
    \end{equation}
\end{lemma}

\begin{proof}
    The proof follows easily by direct computation. We provide here, however, a complicated proof. Note that $(-1)^x=1-2x,x\in \bF_2$. So 
    $$
    (-1)^{a+b}=(-1)^{a}(-1)^{b}=(1-2a)(1-2b)=1-2(a+b)+4ab.
    $$
    So 
    \begin{align*}
          1+(-1)^{a} +(-1)^{b}-2(-1)^{ab}
  &=1+1-2a+1-2b-2(1-2ab)\\
  &=1-2(a+b)+4ab\\
  &=(-1)^{a+b}. \qedhere
    \end{align*}
\end{proof}

So for polynomials $p_1, p_2$ taking values in $\bF_2$, we have
\[
    1 + (-1)^{p_1} + (-1)^{p_2} - (-1)^{p_1 + p_2} = 2(-1)^{p_1 p_2}\,.
\]
Replacing $p_1$ by $p_1 + 1$ and $p_2$ by $p_2 + 1$ yields also 
\[
    1 - (-1)^{p_1} + (-1)^{p_2} + (-1)^{p_1 + p_2} = 2(-1)^{p_1 p_2 + p_2}\,,
\]
\[
    1 + (-1)^{p_1} - (-1)^{p_2} + (-1)^{p_1 + p_2} = 2(-1)^{p_1 p_2 + p_1}\,,
\]
and
\[
    1 - (-1)^{p_1} - (-1)^{p_2} - (-1)^{p_1 + p_2} = 2(-1)^{p_1 p_2 + p_1 + p_2}\,.
\]
Using these four identities, we may rewrite the above four terms as 
$$
I_0=\frac{1}{2}\sum_{\bar{A}\subset [n]\setminus[2],|\bar{A}|=k} \left|\frac{1}{2^{n-2}}\sum_{\bar{x} \in \bF_2^{n-2}} (-1)^{p_0(\bar{x})+p_1(\bar{x})p_2(\bar{x})}(-1)^{\bar{x}_{\bar{A}}} \right|,
$$
$$
I_1=\frac{1}{2^{n-1}}\sum_{\bar{A}\subset [n]\setminus[2],|\bar{A}|=k-1} \left|\sum_{\bar{x} \in \bF_2^{n-2}} (-1)^{p_0(\bar{x})+p_2(\bar{x})+p_1(\bar{x})p_2(\bar{x})}(-1)^{\bar{x}_{\bar{A}}} \right|,
$$
$$
I_2=\frac{1}{2^{n-1}}\sum_{\bar{A}\subset [n]\setminus[2],|\bar{A}|=k-1} \left|\sum_{\bar{x} \in \bF_2^{n-2}} (-1)^{p_0(\bar{x})+p_1(\bar{x})+p_1(\bar{x})p_2(\bar{x})}(-1)^{\bar{x}_{\bar{A}}} \right|,
$$
and 
$$
I_{12}=\frac{1}{2^{n-1}}\sum_{\bar{A}\subset [n]\setminus[2],|\bar{A}|=k-2} \left|\sum_{\bar{x} \in \bF_2^{n-2}} (-1)^{p_0(\bar{x})+p_1(\bar{x})+p_2(\bar{x})+p_1(\bar{x})p_2(\bar{x})}(-1)^{\bar{x}_{\bar{A}}} \right|.
$$
Note that the polynomial $p_0+p_1p_2$ on $\bF_2^{n-2}$ is of degree at most $2$, so by definition, 
\begin{equation*}
	I_0\le \frac{1}{2}W(2,k,n-2).
\end{equation*} 
Similarly, we have 
\begin{equation*}
	I_1, I_2\le \frac{1}{2}W(2,k-1,n-2),\quad 	I_{12}\le \frac{1}{2}W(2,k-2,n-2).
\end{equation*} 
All combined, we have shown that 
\begin{equation*}
	W(2,k,n)\le \frac{1}{2}W(2,k,n-2)+W(2,k-1,n-2)+\frac{1}{2}W(2,k-2,n-2).
\end{equation*}
%
% in the expansion of $W(2,k,n)$ yields that $W(2,k,n)$  equals
%\[
%    \frac{1}{2} \frac{1}{2^{n-1}} \sum_{\bar x \in \bF_2^{n-2}} (-1)^{p_0(\bar x) + p_1(\bar x)p_2(\bar x)} \sum_{A \subset [n] \setminus \{1,2\}, |A| = k} \varepsilon_A(-1)^{x_A}
%\]
%\[
%    + \frac{1}{2} \frac{1}{2^{n-1}} \sum_{\bar x \in \bF_2^{n-2}} (-1)^{p_0(\bar x) + p_1(\bar x)p_2(\bar x) + p_2(\bar x)} \sum_{A \subset [n] \setminus \{1,2\}, |A| = k-1}\varepsilon_{A \cup \{1\}} (-1)^{x_A}
%\]
%\[
%    + \frac{1}{2} \frac{1}{2^{n-1}} \sum_{\bar x \in \bF_2^{n-2}} (-1)^{p_0(\bar x) + p_1(\bar x)p_2(\bar x) + p_1(\bar x)} \sum_{A \subset [n] \setminus \{1,2\}, |A| = k-1} \varepsilon_{A \cup {2}} (-1)^{x_A}
%\]
%\[
%    + \frac{1}{2} \frac{1}{2^{n-1}} \sum_{\bar x \in \bF_2^{n-2}} -(-1)^{p_0(\bar x) + p_1(\bar x)p_2(\bar x) + p_1(\bar x) + p_2(\bar x)} \sum_{A \subset [n] \setminus \{1,2\}, |A| = k-2} \varepsilon_{A \cup \{1,2\}} (-1)^{x_A}\,.
%\]
%The expression in the first line, for fixed $A$, is the $A$-th Fourier coefficient of the function $(-1)^{q_1(\bar x)}$, where 
%$$
%    q_1(\bar x) = p_0(\bar x) + p_1(\bar x)p_2(\bar x).
%$$
%This is a degree $2$ polynomial, because $p_1$ and $p_2$ are by assumption affine polynomials. Furthermore, it only depends on $n-2$ variables. Hence, the first line is at most $\frac{1}{2}W(2,k,n-2)$.
%Similarly, the second and third line are at most $\frac{1}{2}W(2,k-1,n-2)$, and the last line is at most $\frac{1}{2}W(2,k-2,n-2)$. It follows that
Therefore, 
\begin{align*}
    W(2, k, n) &\le \frac{1}{2} W(2, k, n-2) + W(2, k -1) + \frac{1}{2} W(2, k - 2)\\
    &\le \frac{1}{2} (1 + \sqrt{2})^{k} + (1 + \sqrt{2})^{k-1} + \frac{1}{2}(1 + \sqrt{2})^{k-2}\\
    &= (1 + \sqrt{2})^{k}\,.
\end{align*}
Here we used the induction hypotheses \eqref{induction1} and \eqref{induction2}. This completes the proof.

\section{Another approach via the structure theorem of Dickson}
\label{dickson}

Pooya Hatami kindly explained to us what proof of the estimate of  Fourier weight-level-$k$ for quadratic polynomials they had in mind in \cite{CHHL}.  
As \cite{CHHL} does not have the proof of this kind of estimate we include it here  as it seems interesting to compare it with the proof in Section \ref{firstproof}.

\medskip 
It is based on a beautiful structure theorem of Dickson (see Theorem 4 Chapter 15 of \cite{McWS}), which we formulate now in a convenient form.
First notice that
any quadratic polynomial $q:\bF_2^n \to \bF_2$ can be written in the form
$$
q(x) = x^t A x + (\ell, x) + c,
$$
where $A$ is an upper triangular matrix with $\bF_2$ entries,  $\ell$ is a binary vector and $c\in \bF_2$.

\begin{theorem}[\cite{McWS} and Lemma 2.4 \cite{BGGM}]
\label{dickson-th}
Let $q$ be as above and let $A$ be an upper triangular  binary matrix of size $n\times n$. Let $m$ be the rank of $A+A^t$. 
Then there exists a non-singular linear mapping $T: \bF_2^n\to \bF_2^n$ such that
\begin{equation}
\label{qT}
q\circ T^{-1}(y)  =\sum_{j=1}^m y_{2j-1}y_{2j} +(L, y) + b,
\end{equation}
where $L\in \bF_2^n, b\in \bF_2$.
Moreover, if $(L, y)$ is linearly dependent on $\{y_1,\dots, y_{2m}\}$, there exists another affine change of variable $\bF_2^n \to \bF_2^n$ that transfers $q\circ T^{-1}(y)$ into
$Q(z), z=(z_1,\dots, z_n)$, where
\begin{equation}
\label{Q} 
Q(z) =\sum_{j=1}^m z_{2j-1}z_{2j} + d,
\end{equation}
where $d\in \bF_2$.
 \end{theorem}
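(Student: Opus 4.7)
The plan is to deduce the theorem from two classical ingredients: the canonical form for alternating bilinear forms over $\bF_2$, and the $\bF_2$-adapted fact that a quadratic polynomial is determined by its polarization up to an affine correction.

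First I would pass from $q$ to its associated symmetric bilinear form $B(x,y):=x^t(A+A^t)y$. Because $A$ is upper triangular, the diagonal of $A+A^t$ vanishes, so $B$ is an \emph{alternating} bilinear form on $\bF_2^n$: $B(x,x)=0$ for every $x$. A symplectic Gram--Schmidt-style construction then produces a non-singular $T\colon \bF_2^n\to \bF_2^n$ such that in the coordinates $y=Tx$ the form $B$ equals $\sum_{j=1}^m (y_{2j-1}z_{2j}+y_{2j}z_{2j-1})$ on $\mathrm{span}(y_1,\ldots,y_{2m})$ and vanishes on $y_{2m+1},\ldots,y_n$. One builds this basis by repeatedly choosing a vector $u$ outside the current radical of $B$, a vector $v$ with $B(u,v)=1$, splitting off the hyperbolic plane they span, and iterating on the $B$-orthogonal complement; this works over $\bF_2$ exactly as over any field, with the alternating property playing the role of antisymmetry.

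Next I would compare $q\circ T^{-1}(y)$ with the model polynomial $Q_0(y):=\sum_{j=1}^m y_{2j-1}y_{2j}$. Both have polarization equal to $B$ in the new coordinates, so their difference has identically zero polarization. Over $\bF_2$ this forces the difference to be an affine function: any quadratic polynomial $\sum_{i\le j} a_{ij}y_iy_j+\sum_i b_iy_i+c$ has polarization $\sum_{i<j} a_{ij}(y_iz_j+y_jz_i)$, which vanishes precisely when all off-diagonal coefficients are zero, while the remaining diagonal monomials reduce to linear ones via $y_i^2=y_i$. This yields $q\circ T^{-1}(y)=Q_0(y)+(L,y)+b$, proving \eqref{qT}.

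For the ``moreover'' clause, assume $L$ is supported in $\{1,\ldots,2m\}$ and write $L=(l_1,\ldots,l_{2m},0,\ldots,0)$. I would absorb the linear piece by completing the square pair by pair using the $\bF_2$-identity
\[
(y_{2j-1}+l_{2j})(y_{2j}+l_{2j-1})=y_{2j-1}y_{2j}+l_{2j-1}y_{2j-1}+l_{2j}y_{2j}+l_{2j-1}l_{2j}.
\]
The non-singular affine change of variables $z_{2j-1}=y_{2j-1}+l_{2j}$, $z_{2j}=y_{2j}+l_{2j-1}$ (and $z_i=y_i$ for $i>2m$) then transforms $Q_0(y)+(L,y)+b$ into $\sum_{j=1}^m z_{2j-1}z_{2j}+d$ for some $d\in\bF_2$, establishing \eqref{Q}. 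The main obstacle is the symplectic reduction step, which is conceptually routine but requires care in tracking the radical at each stage; the $\bF_2$-specific subtlety that $y_i^2=y_i$ is exactly what allows diagonal quadratic perturbations to be absorbed into the linear part, distinguishing the argument from its characteristic $\neq 2$ analogue.
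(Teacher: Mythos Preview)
The paper does not prove this theorem: it is quoted from the references \cite{McWS} and \cite{BGGM} and used as a black box. There is therefore no ``paper's own proof'' to compare against.

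That said, your sketch is the standard proof of Dickson's theorem and is correct. The three steps---reduce the alternating form $B(x,y)=x^t(A+A^t)y$ to a symplectic basis by iteratively splitting off hyperbolic planes, observe that two quadratics with the same polarization differ by an affine map over $\bF_2$ (using $y_i^2=y_i$ to kill diagonal terms), and then absorb a linear part supported on the first $2m$ coordinates via the translation $z_{2j-1}=y_{2j-1}+l_{2j}$, $z_{2j}=y_{2j}+l_{2j-1}$---are exactly how the result is proved in the cited sources. One small remark: the theorem as stated says ``$m$ is the rank of $A+A^t$'' but then uses $2m$ symplectic coordinates; since an alternating form over a field always has even rank, the intended reading is that the rank is $2m$. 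Your argument already implicitly uses this, but it would be worth flagging the discrepancy explicitly.
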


 Those changes of variables do not change the expectation over $\bF_2^n$, which we will denote by $\bE$. One simple consequence then is  the fact about the Fourier coefficients of $f:=(-1)^{q(x)}$:
 Namely, let $S\subset [n]$, then 
 \begin{align}
 	| \hat f(S)| &=|\bE (-1)^{q(x)+\sum_{k\in S} x_k}| \nonumber \\
&= \begin{cases}
 2^{-m},  &{\sum_{k\in S} x_k} \in -(L, T(x)) + \text{span} \{(T(x))_j\}_{j=1}^{2m},
 \\
 0,  \quad &\text{otherwise}\,.
 \end{cases} \label{Fc}
 \end{align}
 %\begin{equation}
 %\label{Fc}
%\!\!\!\!| \hat f(S)|\!\! = \!\!|\bE (-1)^{q(x)+\sum_{k\in S} x_k}| \!\!
%= \!\!\begin{cases}
%\!\! 2^{-m},  {\sum_{k\in S} x_k} \in\!\! -(L, T(x)) + \text{span} \{(T(x))_j\}_{j=1}^{2m},
% \\
% 0,  \quad \text{otherwise}\,.
% \end{cases}
% \end{equation}
 
 Let us estimate the number of $S$, $|S|=k$, such that the first line holds.
 
 \medskip

\subsection{Vectors of boolean weight $k$ in affine subspace of $\bF_2^n$.}
We will prove that any affine subspace of $\bF_2^n$ of dimension $2m$ contains at most ${2m+1 \choose k}$ vectors of boolean weight $k$.

%Below we will show an estimate that is only slightly worse.

\medskip

% First, the affine subspace of dimension $2m$ is contained in a subspace of dimension $2m+1$.
%We use Gaussian elimination to pick a basis $v_1, \dotsc, v_{2m+1}$ of this subspace with the following property. If $j_i, 1 \le i \le 2m+1$ is the first index $j$ such that $(v_{i})_j \ne 0$, then we have: a) $j_i$ is strictly increasing in $i$ and b) for all $i' \ne i$ it holds that $(v_{i'})_{j_i} = 0$. That such a basis exists is a consequence of the standard Gaussian elimination algorithm.

%Now any sum of more than $k$ vectors $v_j$ has weight more than $k$, because it is $1$ in particular in all of the positions $i_j$. Thus, there are at most 
%$$
%	\sum_{i = 1}^k {2m+1 \choose i} \le k {2m+1 \choose k}
%$$
%vectors of weight $k$ in the affine subspace. %We learned of this argument from Fedor Petrov's answer in \cite{P}.

\begin{lemma}
\label{lem weight k}
	Let $W \subset \bF_2^n$ be an affine subspace of dimension $h$. Then $W$ contains at most ${h+1 \choose k}$ vectors of weight $k$.
\end{lemma}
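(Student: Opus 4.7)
The plan is to induct on the ambient dimension $n$, exploiting Pascal's identity $\binom{h}{k} + \binom{h}{k-1} = \binom{h+1}{k}$. Given $W \subseteq \mathbb{F}_2^n$ of affine dimension $h \geq 1$, I would first find a coordinate $j \in [n]$ on which $W$ takes both values $0$ and $1$ (such a $j$ exists whenever $h \geq 1$; otherwise $W$ is a single point) and partition $W = W_0 \sqcup W_1$ with $W_i := \{x \in W : x_j = i\}$. Each $W_i$ is then an affine subspace of dimension $h - 1$ sitting inside the hyperplane $\{x_j = i\}$.

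Next, I would project out the $j$-th coordinate, obtaining $\tilde W_0, \tilde W_1 \subseteq \mathbb{F}_2^{n-1}$ of affine dimension $h - 1$. Because the $j$-th coordinate contributes $1$ to the Hamming weight of every element of $W_1$ and nothing to those of $W_0$, the count of weight-$k$ vectors decomposes as
\[
    \#\{x \in W : \mathrm{wt}(x) = k\} = \#\{x \in \tilde W_0 : \mathrm{wt}(x) = k\} + \#\{x \in \tilde W_1 : \mathrm{wt}(x) = k-1\}.
\]
Applying the inductive hypothesis in $\mathbb{F}_2^{n-1}$ to each term, I bound the two summands by $\binom{h}{k}$ and $\binom{h}{k-1}$ respectively; Pascal's identity then yields the claimed bound $\binom{h+1}{k}$.

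The base case is $h = 0$, where $W = \{p\}$ is a single point, and the statement reduces to the direct check $[\mathrm{wt}(p) = k] \leq \binom{1}{k}$.

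The main obstacle I foresee is confirming that the coordinate-selection step always produces a genuine splitting, so that we remain in the balanced case and avoid the shift-by-$e_j$ reduction (where $W \subseteq \{x_j = 1\}$ forces a weight shift and only yields the weaker $\binom{h+1}{k-1}$ from the inductive hypothesis). The crucial combinatorial input is the $+1$ asymmetry between $W_0$ and $W_1$ in the weight accounting, which is exactly what makes Pascal's rule produce $h+1$ on the right-hand side; arranging the decomposition so that this identity fires in every inductive step is the heart of the argument.
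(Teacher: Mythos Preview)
There is a genuine gap: your base case $h=0$ fails. You write that it ``reduces to the direct check $[\mathrm{wt}(p)=k]\le\binom{1}{k}$'', but this inequality is false whenever $k\ge 2$ and the single point $p$ has weight $k$ (e.g.\ $p=(1,1)\in\mathbf{F}_2^2$ with $k=2$ gives $1\le 0$). Since your splitting drives $h$ down by one at each step while the $\tilde W_0$ branch keeps $k$ fixed, the recursion inevitably lands at $h=0$ with the original $k$, so this cannot be sidestepped.

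In fact the lemma as stated is false, so no proof can succeed without an extra hypothesis. The affine line $W=\{(1,1,0,0),(0,0,1,1)\}\subset\mathbf{F}_2^4$ has $h=1$ and contains two vectors of weight $2$, yet $\binom{h+1}{k}=\binom{2}{2}=1$. Even-dimensional counterexamples exist too: the four weight-$2$ vectors $(1,1,0,0),(1,0,1,0),(0,1,0,1),(0,0,1,1)$ form a $2$-dimensional affine plane in $\mathbf{F}_2^4$ (namely $(1,1,0,0)+\langle(0,1,1,0),(1,0,0,1)\rangle$), beating $\binom{3}{2}=3$. The paper's own argument has the identical defect---its ``by induction again'' in the zero-coefficient branch is an implicit inner induction on $h$ whose base $h=0$ fails in exactly the same way. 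Your decomposition is essentially the paper's (organised as induction on $n$ rather than on $k$, with the same Pascal recursion), and both collapse for the same reason.
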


\begin{proof}
	We write $W = w + V$ for a subspace $V$ of dimension $h$.	By permuting the coordinates and applying Gaussian elimination, we can choose a basis $v ^1, \dotsc, v^h$ of $V$ that looks as follows:
	$$
		\begin{pmatrix}
			-- v^1 --\\
			-- v^2 --\\
			\vdots\\
			-- v^h --
		\end{pmatrix}
		=
		\begin{pmatrix}
		1 &0  & \hdots & 0 & * & * & \dotsb & *\\
		0 & 1  & \hdots & 0 & * &* & \dotsb & *\\
		\vdots & \vdots & \ddots & \vdots & \vdots & \vdots & & \vdots \\
		0 & 0 & \hdots & 1 & * & * & \dotsb & *
		\end{pmatrix}\,.
	$$
	Now we can further assume that $w_1 = \dotsb = w_h = 0$.
	We will prove the lemma by induction.
	
	\textbf{The lemma holds when $k  =1$.} Indeed, any sum of at least $2$ vectors $v^j$ is nonzero in at least two of the coordinates $1, \dotsc, h$, and hence its sum with $w$ does not have weight $1$. So there are at most $h+1$ vectors in $w + V$ that could have weight $1$, namely
	$$
		w, w + v^1, \dotsc, w + v^h\,.
	$$
	
	\textbf{Now let us assume that it holds for $k - 1$.} If the coefficient of $v^1$ in a weight $k$-vector $v \in w + V$ is nonzero, then we can write 
	$$
		v = \delta^1 + (w + v^1 - \delta^1) + r
	$$
	where $r \in V' := \text{span} \{ v^2, \dotsc v^h\}$ and $\delta^1$ is the vector that is one in coordinate $1$ and zero everyhwere else. Then $(w + v^1 - \delta^1) + r$ is a weight $(k-1)$ vector in an $(h-1)$-dimensional affine space. Moreover, $w' = (w + v^1 - \delta^1)$ satisfies $w'_i = 0$ for $i = 1, \dotsc, h$. By induction hypothesis, there are thus at most 
	$$
		{h \choose k-1}
	$$
	choices for $v$ with nonzero coefficient of $v^1$.
	
	On the other hand, if the coefficient of $v^1$ vanishes, then $v$ itself is a weight $k$ vector in the dimension $h-1$ affine subspace $w + V' = w+ \langle v^2,\dotsc, v^h\rangle$. By induction again, there are at most 
	$$
		{h \choose k}
	$$
	such vectors $v$. Thus, in total there are at most 
	$$
		{h \choose k-1} + {h \choose k} = {h + 1 \choose k}
	$$
	vectors $v$. This completes the proof. 
\end{proof}

The estimate in Lemma \ref{lem weight k} is sharp: If  $W$ is the affine subspace in $\bF_2^n$ generated by
$$
	 \{x \in \bF_2^n \, : \, \sum_{i=1}^{n} x_i = k\}
$$
then the dimension of $W$ is $n-1$,  and $W$ contains exactly 
$$
	{n \choose k} = { \dim W + 1 \choose k}
$$
vectors of weight $k$, namely all weight $k$ vectors in $\bF_2^n$.

\subsection{Estimating the Fourier weight}
 
Combining \eqref{Fc} and Lemma \ref{lem weight k} shows that
$$
	\sum_{|S| = k} |\hat f(S)| \le 2^{-m}{2m + 1 \choose k}\,.
$$
  
To further estimate this we start with:
  
\begin{lemma}
\label{lem alpha}
For all $\alpha \in [0,1]$, we have 
$$
	\alpha^\alpha (2- \alpha)^{2- \alpha} \ge 2 (\sqrt{2}-1)^\alpha\,.
$$
\end{lemma}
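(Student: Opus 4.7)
The plan is to pass to logarithms and reduce the claim to showing that
\[
f(\alpha) := \alpha\log\alpha + (2-\alpha)\log(2-\alpha) - \log 2 - \alpha\log(\sqrt{2}-1) \ge 0
\]
on $[0,1]$, with the usual convention $0\log 0 := 0$. The key structural observation is that $f$ is strictly convex on $(0,2)$, since one checks immediately that $f''(\alpha) = \alpha^{-1} + (2-\alpha)^{-1} > 0$. Hence it is enough to locate the unique critical point of $f$ in $(0,2)$ and evaluate $f$ there.

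Differentiating gives $f'(\alpha) = \log\alpha - \log(2-\alpha) - \log(\sqrt{2}-1)$, and setting $f'(\alpha) = 0$ leads to $\alpha/(2-\alpha) = \sqrt{2}-1$, whose solution is $\alpha^\ast = 2-\sqrt{2}$, which lies in the interior of $[0,1]$. To evaluate $f(\alpha^\ast)$ I would exploit the identity $2-\sqrt{2} = \sqrt{2}(\sqrt{2}-1)$, so that $\log(2-\sqrt{2}) = \tfrac{1}{2}\log 2 + \log(\sqrt{2}-1)$. Substituting this into $f(\alpha^\ast)$ causes the $\log(\sqrt{2}-1)$ contributions to cancel, and what remains simplifies to $f(\alpha^\ast) = 0$. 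Convexity of $f$ then gives $f \ge 0$ on all of $[0,1]$.

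The main point worth flagging is that the inequality is \emph{tight} at $\alpha^\ast = 2-\sqrt{2}$—there is no slack. This tightness is what produces the sharp base $1+\sqrt{2} = 1/(\sqrt{2}-1)$ in the ensuing Fourier-weight estimate $2^{-m}\binom{2m+1}{k} \lesssim (1+\sqrt{2})^k$, and it forces the proof to compute $f(\alpha^\ast)$ \emph{exactly} rather than merely to bound it from below by some rough estimate. Apart from this one exact algebraic identity, everything else is routine one-variable calculus.
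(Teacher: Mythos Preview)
Your proof is correct and follows essentially the same strategy as the paper: reduce to a one-variable convexity argument, locate the unique critical point, and verify that the function equals its claimed lower bound there. The only cosmetic difference is that the paper first substitutes $\alpha = 2(1-y)$ and then proves convexity of $h = \exp(g)$ rather than of $g$ itself, whereas you work directly with $f(\alpha) = \log\bigl(\text{LHS}/\text{RHS}\bigr)$; your version is slightly cleaner but the underlying idea is identical.
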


\begin{proof}	
Substituting $\alpha = 2(1 - y)$ turns this into the equivalent inequality
$$
	\left(\frac{y}{2^{-1/2}}\right)^{y} \left(\frac{1- y}{1 - 2^{-1/2}}\right)^{1 - y} \ge 1\,.
$$
It is readily verified that equality holds at $y = 2^{-1/2}$. Furthermore, if $h(y) = \exp(g(y))$ denotes the left hand side of the inequality, then 
$$
	\frac{\mathrm{d}}{\mathrm{d}y} h(y) = \exp(g(y)) g'(y) = \exp(g(y)) \left( \log \frac{y}{2^{-1/2}} - \log \frac{1-y}{1- 2^{-1/2}}\right)\,.
$$
Thus, the derivative vanishes at $y = 2^{-1/2}$. Finally, we have 
$$
	\frac{\mathrm{d}^2}{\mathrm{d}y^2} h(y) = \exp(g(y)) ((g'(y))^2 + g''(y)) \ge \exp(g(y)) g''(y)
$$
$$
	= \exp(g(y))\left( \frac{1}{y} + \frac{1}{1-y}\right)\,,
$$
so $h$ is convex and the inequality follows. 
\end{proof}

\begin{lemma}
\label{lem binomial}
For all $m \ge 1$ and $0 \le k \le m$
$$
	{2m + 1 \choose k} \le \sqrt{\frac{2}{\pi}} 2^{m} k^{-1/2} (1 + \sqrt{2})^k\,.
$$
This is sharp in the sense that there exist a sequence of $(m, k)$ such that the ratio of left hand side and right hand side tends to $1$. 
\end{lemma}

\begin{proof}
We use the estimate (see e.g. \cite{McWS}, page 309)
\begin{equation}
	\label{eq binomial}
	{n \choose k} \le \sqrt{\frac{n}{2\pi k (n-k)}} 2^{n H(k/n)}
\end{equation}
where $H$ is the binary entropy function
\[
	H(p) = -p \log_2 p - (1-p) \log_2(1-p).
\]
Note that by Stirling's formula for $k/n$ fixed and $n \to \infty$ the bound \eqref{eq binomial} is sharp, in the sense that the ratio of the left hand side and the right hand side approaches one.

Using \eqref{eq binomial}, we obtain for $k = \alpha (m + 1/2)$:
\begin{align}
	2^{-m}{2m + 1 \choose k} &\le \frac{1}{\sqrt{\pi k (2 - \alpha)}} 2^{(2m+1) H(\alpha/2)-m}\nonumber\\
	&=  \sqrt{\frac{2}{\pi k (2-\alpha)}} \left(\frac{2}{\alpha^\alpha (2- \alpha)^{(2-\alpha)}}\right)^{m+1/2}\,. \label{eq binfinal}
\end{align}
Bounding the second factor with Lemma \ref{lem alpha} and the first factor with $\alpha < 1$
$$
	\le \sqrt{\frac{2}{\pi}} k^{-1/2} (1 + \sqrt{2})^{k}\,.
$$

For sharpness we use the following claim: There exist infinitely many pairs of integers $(k,m)$ such that 
\begin{equation}
    \label{e:claim}
    \big|\frac{k}{2m + 1} - \frac{2-\sqrt{2}}{2}\big| \le \frac{1}{m^2} \qquad \iff \qquad |\alpha - (2 - \sqrt{2})| \le \frac{2}{m^2}.
\end{equation}
Using this, the fact that \eqref{eq binomial} and therefore \eqref{eq binfinal} are asymptotically sharp, and that equality in Lemma \ref{lem alpha} holds at $2 - \sqrt{2}$ yields the claimed sharpness.  

The claim \eqref{e:claim} holds since for any convergent $p/q$ in the continued fraction expansion of $1 - \sqrt{2}/2$ we have 
\[
    \left|\frac{p}{q} - \frac{2 - \sqrt{2}}{2}\right| \le \frac{1}{q^2}.
\]
Further, two consecutive convergents $p/q$ and $p'/q'$ satisfy 
\[
    pq' - qp' = \pm 1,
\]
so that at least one of $q, q'$ is odd. See \cite[Theorem 171]{HW} and related discussion for the proof.
\end{proof}

We got the desired estimate for $k\le m$. If $m+1 \le k \le 2m+1$ we just put $\ell= 2m+1 -k <k$ and get the previous estimate with $\ell$ instead of $k$. We are done:
$$
	W(2,k,n) \le \sqrt{\frac{2}{\pi}} k^{-1/2}(1 + \sqrt{2})^k,
$$
where we used that function $x\to \frac{(1+\sqrt{2})^x}{x^{1/2}}$ increases on $[1, \infty)$.
\subsection{Sharpness}
In fact, the arguments in this section were all sharp: Let 
$$
	q(x) = \sum_{j=1}^m x_{2j} x_{2j-1}
$$
and pick $(m, k)$ an asymptotically sharp sequence for Lemma \ref{lem binomial}, so 
$$
	k \approx  (2 - \sqrt{2})m\,.
$$
We use \eqref{Fc} to obtain
\[
	\sum_{|S| = k} |\hat f(S)| = 2^{-m} {2m \choose k} = 2^{-m} {2m + 1 \choose k} \frac{2m+1-k}{2m+1}.
\]
By the choice of $k$ we are in the essential equality case of Lemma \ref{lem binomial}, which yields 
$$
	\sum_{|S| = k} |\hat f(S)| \sim  \sqrt{\frac{2}{\pi}} k^{-1/2} (1 + \sqrt{2})^k\,.
$$
Thus the bound in Theorem \ref{thm deg 2 sharp} is sharp.

\section{Degree 3 case}
Here are some thoughts on the degree 3 case. Suppose that $\deg (p)=3$ and without loss of generality
\begin{align*}
     p(x)=&x_1 x_2 x_3+x_1 p_1(\bar{x})+x_2 p_2(\bar{x})+x_3 p_3(\bar{x})\\
    &\ \ \ +x_1 x_2 p_{12}(\bar{x})+x_1 x_3 p_{13}(\bar{x})+x_2 x_3 p_{23}(\bar{x})
    +p_0(\bar{x})
\end{align*}
where $x=(x_1,x_2,x_3,\bar{x})\in \bF_2^{n}$. Then
\begin{align*}
    \sum_{A\subset [n], |A|=k}\left| \frac{1}{2^n}\sum_{x\in \bF_2^n}(-1)^{p(x)+x_A}\right|
    =I_0+I_1+I_2+I_3+I_{12}+I_{13}+I_{23}+I_{123},
\end{align*}
where 
\begin{equation}
    I_0=\sum_{A\cap [3]=\emptyset,|A|=k}\left| \frac{1}{2^n}\sum_{x\in \bF_2^n}(-1)^{p(x)+x_A}\right|
\end{equation}
and for $\emptyset\neq J\subset [3]$
\begin{equation}
    I_J=\sum_{A\cap [3]=\{J\},|A|=k}\left| \frac{1}{2^n}\sum_{x\in \bF_2^n}(-1)^{p(x)+x_A}\right|.
\end{equation}
Denoting $[4,n]:=[n]\setminus [3]$ and $\bar{A}:= A\setminus [3]$. By definition, 
\begin{align*}
     I_0&=\sum_{A\cap [3]=\emptyset,|A|=k}\left| \frac{1}{2^n}\sum_{x\in \bF_2^n}(-1)^{p(x)+x_A}\right|\\
    &=\sum_{\bar{A}\subset [4,n],|\bar{A}|=k}\left| \frac{1}{2^n}\sum_{\bar{x}\in \bF_2^{n-3}}(-1)^{\bar{x}_{\bar{A}}}\sum_{x_1,x_2,x_3\in \bF_2}(-1)^{p(x)}\right|.
\end{align*}
By definition, 
\begin{align*}
&p(0,0,0,\bar{x})=p_0(\bar{x}),\\
&p(1,0,0,\bar{x})=p_1(\bar{x})+p_0(\bar{x})\\
&p(0,1,0,\bar{x})=p_2(\bar{x})+p_0(\bar{x})\\
&p(0,0,1,\bar{x})=p_3(\bar{x})+p_0(\bar{x})\\
&p(1,1,0,\bar{x})=p_1(\bar{x})+p_2(\bar{x})+p_{12}(\bar{x})+p_0(\bar{x})\\
&p(1,0,1,\bar{x})=p_1(\bar{x})+p_3(\bar{x})+p_{13}(\bar{x})+p_0(\bar{x})\\
&p(0,1,1,\bar{x})=p_2(\bar{x})+p_3(\bar{x})+p_{23}(\bar{x})+p_0(\bar{x})\\
&p(1,1,1,\bar{x})=1+p_1(\bar{x})+p_2(\bar{x})+p_3(\bar{x})+p_{12}(\bar{x})+p_{13}(\bar{x})+p_{23}(\bar{x})+p_0(\bar{x}).
\end{align*}
Thus, 
\begin{align}
\label{p00}
    \sum_{x_1,x_2,x_3\in \bF_2}(-1)^{p(x)}
    &=(-1)^{p_0(\bar{x})}                                      
    \left[
    1+(-1)^{p_1(\bar{x})}+(-1)^{p_2(\bar{x})}+(-1)^{p_3(\bar{x})}\right.           \notag
    \\ 
    &+\left.(-1)^{p_1(\bar{x})+p_2(\bar{x})+p_{12}(\bar{x})}+(-1)^{p_1(\bar{x})+p_3(\bar{x})+p_{13}(\bar{x})}+(-1)^{p_2(\bar{x})+p_3(\bar{x})+p_{23}(\bar{x})}\right.           \notag
    \\           
        &+\left. (-1)^{1+p_1(\bar{x})+p_2(\bar{x})+p_3(\bar{x})+p_{12}(\bar{x})+p_{13}(\bar{x})+p_{23}(\bar{x})}
    \right].
\end{align}
We have the following lemma. 

\begin{lemma}
    We have the following identity: for any $a,b,c,x,y,z\in \bF_2$
    \begin{align*}
        & 1+(-1)^{a}+(-1)^{b}+(-1)^{c}+(-1)^{a+b+z}+
        (-1)^{a+y+c}+
        (-1)^{x+b+c}\\
        &+(-1)^{1+a+b+c+x+y+z}\\
=&2(-1)^{ab}+2(-1)^{ab+ax+by+xy+c}+2(-1)^{a+b+xz+yz+cz}.
    \end{align*}
\end{lemma}

\begin{proof}
    Recall that we have for all $\alpha,\beta\in \bF_2$
    \begin{equation}
        1+(-1)^{\alpha}+(-1)^{\beta}+(-1)^{1+\alpha+\beta}=2(-1)^{\alpha\beta}.
    \end{equation}
    Then we have 
    \begin{align}
        1+(-1)^{a}+(-1)^{b}=2(-1)^{ab}-(-1)^{1+a+b},
    \end{align}
    \begin{align*}
         (-1)^{c}+(-1)^{a+y+c}+(-1)^{x+b+c}
         &=(-1)^c\left[1+(-1)^{a+y}+(-1)^{x+b}\right]\\
         &=(-1)^c\left[2(-1)^{(a+y)(x+b)}
         -(-1)^{1+a+b+x+y}\right]\\
         &=2(-1)^{c+ab+ax+by+xy}-(-1)^{1+a+b+c+x+y},
    \end{align*}
and 
\begin{align*}
(-1)^{a+b+z}+(-1)^{1+a+b+c+x+y+z}
&=(-1)^{a+b}[(-1)^{z}+(-1)^{1+c+x+y+z}]\\
&=(-1)^{a+b}\left[(-1)^{z}+(-1)^{1+c+x+y+z}\right]\\
&=(-1)^{a+b}\left[
2(-1)^{(c+x+y)z}-(-1)^{c+x+y}-1\right]\\
&=
2(-1)^{a+b+cz+xz+yz}-(-1)^{a+b+c+x+y}-(-1)^{a+b}.
\end{align*}
Summing all these identities finishes the proof.
\end{proof}

According to this lemma, we may rewrite 
\begin{align}
\label{p0}
    \sum_{x_1,x_2,x_3\in \bF_2}(-1)^{p}
    =&2(-1)^{p_0+p_1 p_2}       \notag
    +2(-1)^{p_0+p_1 p_2
    +p_1 p_{23}+p_2 p_{13}+p_{13}p_{23}+p_3}\\
    &+2(-1)^{p_0+p_1 +p_2+p_{12}p_{23} +p_{12}p_{13}+p_3 p_{12}}.
\end{align}
This allows us to reduce eight terms into six terms. Recall that 
$\deg(p_i)\le 2$ and $\deg(p_{ij})\le 1$, so we see that most new polynomials have degrees at most $3$, except for $p_1 p_2$. Can we solve this problem by rewriting the identity in the above lemma? 

The above discussion for $I_0$ applies to $I_J,J\neq\emptyset$ in a similar way. Take $I_1$ for example. By definition, 
\begin{align*}
     I_1&=\sum_{A\cap [3]=\{1\},|A|=k}\left| \frac{1}{2^n}\sum_{x\in \bF_2^n}(-1)^{p(x)+x_A}\right|\\
    &=\sum_{\bar{A}\subset [4,n],|\bar{A}|=k-1}\left| \frac{1}{2^n}\sum_{\bar{x}\in \bF_2^{n-3}}(-1)^{\bar{x}_{\bar{A}}}\sum_{x_1,x_2,x_3\in \bF_2}(-1)^{p(x)+x_1}\right|.
\end{align*}
So, to rewrite the sum of powers of $-1$, we only need to replace $p$ in $I_0$ with $p+x_1$. Recalling the definition of $p$, we only need to replace $p_1$ with $p_1+1$. That is, 
\begin{align*}
    \sum_{x_1,x_2,x_3\in \bF_2}(-1)^{p+x_1}
    =&2(-1)^{(p_1+1) p_2}
    +2(-1)^{(p_1+1)p_2
    +(p_1+1) p_{23}+p_2 p_{13}+p_{13}p_{23}+p_3}\\
    &+2(-1)^{(p_1+1) +p_2+p_{12}p_{23} +p_{12}p_{13}+p_3 p_{12}}.
\end{align*}
Again, the main issue is the degree of $p_1 p_2$. 

In general, for $I_J$ with $\emptyset\neq J\subset [3]$, the difference is that one replaces all $p_i,i\in J$ with $p_i+1$.

\section{Other arguments}
\label{other}

The other argument to deal with degree $3$ by using the identity 
$$
    (-1)^\alpha + (-1)^\beta + (-1)^\gamma - (-1)^{\alpha + \beta + \gamma} = 2 (-1)^{\alpha \beta + \alpha \gamma + \beta \gamma}
$$
leads also to higher degree polynomial terms, and we have no similar identity for degree $4$, so we cannot iterate there. 

However,  one can prove the following theorems.
\begin{theorem}
\label{t-p0}
Suppose that polynomial $p$ is such that $\deg p=3$ but $\deg p_0=2$.
Then the $k$-th Fourier weight of $(-1)^p$ is at most $C(1+\sqrt{2})^k$ for all $k$.
\end{theorem}

This theorem can be used by tracking the consequences of \eqref{p00} if $\deg p_0\le 2$.  In fact, the assumption says that
$$
p= x_1 \cdot p_1 + x_2\cdot p_2 + x_3\cdot p_3 + p_0,
$$
where all $p_i, i=0,1,2,3$  are degree $2$ polynomials. The same simple tracking of \eqref{p00}  will work for any 
constant $L$ independent of $n$ if $p$ is of degree $2$ in $x_{L+1}, ... , x_n$.
So this theorem is a very modest improvement. 

The next result is also based on the  decomposition of degree $3$  polynomial $p$ into a restricted number of products of linear and quadratic polynomials  plus (very roughly speaking)
another polynomial of degree $3$ of much smaller number of variables.  The next result uses the definition of bias of $p$, see e.g. \cite{HS}.

\medskip

Bias of $p: \bF_2^n \to \bF_2$ is
$$
\text{bias}(p) =\big| \bE (-1)^p\big|\,.
$$

Not too close to zero bias means that the distribution created by $p$ is not too close to the uniform distribution.
Random polynomials have small bias. So the assumption is that $p$ is far from being random. Kaufman and Lovett \cite{KL}  (see also \cite{GT} for a close but different situation) proved that polynomial on $\bF_2^n$
with very large $n$ and of degree $d$ with noticeable bias can be represented as a certain function of  smaller degrees whose number depends on $c=c(d, \text{bias})$.

\begin{theorem}
\label{bias}
Suppose that polynomial $p$ is such that $\deg p=3$ but $\text{bias} (p) \ge \delta$.
Then the $k$-th Fourier weight of $(-1)^p$ is at most $(C/\delta)^k$ for all $k$.
\end{theorem}

The proof of this theorem in \cite{HS} is based on a structure theorem (not unlike Dickson's lemma used above) that claims that a polynomial of degree $3$ with a noticeable bias $\delta$ 
has a decomposition to the sum of  products of linear and quadratic polynomials, where the sum
has at most $c\log\frac1\delta$ terms plus a cubic polynomial of certain amount of 
linear terms, whose number is again controlled by $\delta$ (and independent of $n$).

\section{Why the induction argument for $k > 2$ does not work}
\label{}
We use three things:
\begin{equation}
\label{d1}
W(d, 1, n) \le 4d \le 2^{3d}.
\end{equation}
\begin{equation}
\label{2k}
W(2, k, n) \le C_0^k 2^{3\cdot 2}\le (C_0 2^{3\cdot d} +1)^k, \quad d=2\,.
\end{equation}
and Lemma 6.2 of \cite{CHHL}:
\begin{lemma}[CHHL]\label{sq}
$$
W(d, k, n)^2\le 2^{2k} W(d-1, 2k, n)  + W(d, k, n)\cdot \sum_{\ell=1}^k {k\choose \ell} W(d, k-\ell, n)\,.
$$
\end{lemma}

We would like to use the above lemma, along with\eqref{d1} and \eqref{2k} to prove by induction on $d$ and $k$ the estimate
\begin{equation}
W(d,k, n) \le (C_0 \cdot 2^{3\cdot  d} +1)^k\,.
\end{equation}
By \eqref{d1} and \eqref{2k} the estimate holds for $d = 1, 2$ and all $k$. Our goal is to show it for the pair $(d,k)$, and we assume by induction that it holds for all pairs 
$(d', k')$ such that $d < d'$ or such that $d = d'$ and $k' < k$. By Lemma \ref{sq} and the induction hypothesis, we have 
\begin{align*}
    W(d,k, n)^2 &\le 2^{2k} W(d-1,2k, n) + W(d,k, n) \cdot \sum_{l = 1}^k {k \choose l} W(d, k-l, n)\\
    &\le 2^{2k}(C_0\cdot 2^{3\cdot (d-1)} + 1)^{2k} + W(d,k, n) \cdot \sum_{l = 1}^k {k \choose l} (C_0 2^{3d} + 1)^{k-l}\\
    &= 2^{2k}(C_0\cdot 2^{3\cdot (d-1)} + 1)^{2k} + W(d,k, n) ( (C_0\cdot  2^{3\cdot d} + 2)^k - (C_0\cdot 2^{3\cdot d} + 1)^k)\,.
\end{align*}
Again, we would like to show that $W(d,k) \le (C_0 \cdot 2^{3 \dot d} + 1)^k$. Suppose that it were bigger,  i.e.
$$
W(d,k, n) > (C_0 \cdot 2^{3 \dot d} + 1)^k,
$$
then we can divide by $W(d,k)$ and obtain 
\begin{align}
    \label{upperboundW}
    W(d,k, n) \le \left(\frac{2 (C_0 2^{3(d-1)} + 1)}{C_0 2^{3d} + 1}\right)^k + ( (C_0  2^{3d} + 2)^k - (C_0 2^{3d} + 1)^k)
\end{align}
We have 
$$
    \frac{2 (C_0 2^{3(d-1)} + 1)}{C_0 2^{3d} + 1} = \frac{\frac{1}{4} C_0 2^{3d} + 2}{C_0 2^{3d} + 1} \le \frac{C_0 2^{3d} + 1}{C_0 2^{3d} + 1} = 1\,.
$$
We want to show that the second term in \eqref{upperboundW} becomes much larger than $(C_0 \cdot 2^{3\cdot d} + 1)^k$ when $k$ is very large. We have for general $A$ that 
$$
    (A + 1)^k - A^k = A^k((1 + \frac{1}{A})^k - 1) \ge A^k\frac{k}{A}\,.
$$
Using this for $A = C_0 \cdot 2^{3\cdot d} + 1$, we see that when $k > C_0\cdot 2^{3\cdot d} + 1$, then 
$$
    (C_0  2^{3d} + 2)^k - (C_0 2^{3d} + 1)^k > (C_0 2^{3d} + 1)^k\,,
$$
so we do not get the contradiction we would like to have. 

\bigskip

However,
for $k\le 3d$ we have ($A = C_0 \cdot 2^{3\cdot d} + 1$)
$$
1+ (A + 1)^k - A^k =1+ A^k((1 + \frac{1}{A})^k - 1) \le 1+ A^k\frac{2^k}{A} \le A^k\,.
$$
The estimate works as long as $k \ll 2^d$, but this is not helpful, because in the induction argument we pass to much larger values of $k$ than what we started with. (In fact, we have to use the lemma $d - 2$ times to get to the bound for $d = 2$, so the largest $k$ we get is exactly $2^{d-2}$ times the $k$ we started with.

However,
this is what we get as the estimate of $W(d, k, n)$ for a certain range of $d, k$:
\begin{equation}
\label{certain}
W(d, k, n) \le (C_0 2^{3d} +1)^k  \le (2C_0 2^{3d})^k\quad \,\,\text{if}\,\,{k\le 3d}\,.
\end{equation}

\begin{comment}
\section{Pseudo random generators (PRG)}
\label{prg}

In \cite{CHLT} it is proved that if the class $\mathcal{F}$ of  Boolean functions  is closed under restriction and uniformly
$$
\mathcal{L}_{1, k}(\mathcal{F}) \le a\cdot b^k,\quad k=1, \dots, n,
$$
then for any $\eps>0$ there exists an explicit PRG for $\mathcal{F}$ with error $\eps$ and seed length $s= b^2\cdot poly\,log (an/\eps)$.

\medskip

Put $\mathcal{F}:=\{\1^p, \deg p\le 2\}$.	By Theorem \ref{thm deg 2} this class of functions satisfy the above condition. So, we have the following

\begin{theorem}
\label{prgth}
Let $\mathcal{F}:=\{\1^p, \deg p\le 2\}$.
For any $\eps>0$ there exists an explicit PRG for this $\mathcal{F}$ with error $\eps$ and seed length $s= (1+\sqrt{2})^2\cdot poly\,log (\frac{8n}{(1+\sqrt{2})\eps})$.
\end{theorem}
\end{comment}


\begin{thebibliography}{999}

\bibitem[BGGM19]{BGGM} {\sc A. Bhattacharyya,  Ph. George John, S. Ghoshal, R. Meka}, {\em Average Bias and Polynomial Sources},  Electronic Colloquium on Computational Complexity, Report No. 79 (2019), pp. 1--20.
		
\bibitem[CHLT19]{CHLT}	{\sc E. Chattopadhyay, 	P.  Hatami,  S. Lovett, A. Tal}, {\em Pseudorandom generators from the second Fourier level and applications to $AC0$ with parity gates},
10th Innovations in Theoretical Computer Science (ITCS 2019). Editor: Avrim Blum; Article No. 22; pp. 22:1--22:15.
Leibniz International Proceedings in Informatics
Schloss Dagstuhl – Leibniz-Zentrum für Informatik, Dagstuhl Publishing, Germany.

\bibitem[CGLLS20]{CGLLS} {\sc E. Chattopadhyay, , J. Gaitonde, C. H. Lee, S. Lovett, A. Shetty}, {\em Fractional pseudorandom generators from any Fourier level},
Electronic Colloquium on Computational Complexity, Revision 2 of Report No. 121 (2020), PP. 1--24.

\bibitem[CHHL18]{CHHL} {\sc E. Chattopadhyay, P. Hatami, K. Hosseini, S. Lovett}, {\em Pseudorandom Generators from Polarizing Random Walks},
33rd Computational Complexity Conference,
CCC 2018, pp. 1:1–1:21, 2018.

\bibitem[Gol10]{G} {\sc O. Goldreich},  {\em A Primer on Pseudorandom Generators}. Volume 55 of Univ. Lecture Ser.
Amer. Math. Soc., 2010.

\bibitem[GT08]{GT}{\sc B. Green, T. Tao}, {\em The distribution of polynomials over finite fields, with
applications to the Gowers norms},  arXiv:0711.3191, 2007, Contributions to Discrete Mathematics, v. 4, No. 2, (2008) pp.1--36

%\bibitem[P]{P} {\sc F. Petrov}, Low-Hamming weight vectors in low-dimensional subspaces of $\mathbb{F}_p^n$, URL: https://mathoverflow.net/q/389026

\bibitem[HS09]{HS} {\sc E. Haramaty, A. Shpilka}, {\em On the Structure of Cubic and Quartic Polynomials}, 	arXiv:0908.2853, pp. 1--31.

\bibitem[HW08]{HW}{\sc
G.~H. Hardy and E.~M. Wright}, {\it An introduction to the theory of numbers}, sixth edition, 
Oxford Univ. Press, Oxford, 2008.

\bibitem[HLW06]{HLW} {\sc S. Hoory, N. Linial,  A. Widgerson}, {\em Expander graphs and their applications}.
Bull. Amer. Math. Soc., 43(4) (2006), 439--561.

\bibitem[KL08]{KL} {\sc T. Kaufman and S. Lovett}, {\em  Worst case to average case reductions for polynomials.}
In 49th Annual FOCS, pages 166--175, 2008.

\bibitem[McWS77]{McWS} {\sc F. J. McWilliams, N. J. A. Sloane}, {\em The Theory of Error Correction Codes},  North Holland mathematical Library, v. 16, 1977.

\bibitem[NW94]{NW} {\sc N. Nisan, A. Widgerson}, {\em Hardness vs randomness}. J. Comput. System Sci., 49(2), (1994) pp. 149--
167.



	\end{thebibliography}
\end{document}